\newcommand{\Q}{{\mathbb Q}}
\newcommand{\C}{{\mathbb C}}
\newcommand{\Z}{{\mathbb Z}}
\newcommand{\Frob}{{\rm Frob}}
\newtheorem{theorem}{Theorem}
\newtheorem{proposition}{Proposition}
\theoremstyle{remark}
\newtheorem{remark}{Remark}
\begin{document}

\author{Vijay M. Patankar}
\address{School of Physical Sciences, Jawaharlal Nehru University, New
  Delhi, INDIA 110067.\\
email: vijaypatankar@gmail.com}  
\author{C. S. Rajan}
\address{School of Mathematics, Tata Institute of Fundamental
  Research, Dr. Homi Bhabha Road, Colaba, Bombay, INDIA 400005.\\
email: rajan@math.tifr.res.in}


\title{Distinguishing Galois representations by their normalized traces}

\begin{abstract}  

Suppose \( \rho_1 \) and \( \rho_2 \) are two pure
Galois representations of the absolute Galois group of a number field
$K$ of weights \( k_1 \) and \( k_2 \) respectively,  having equal
normalized Frobenius traces \( Tr(\rho_1(\sigma_v)) /Nv^{k_1/2}\) and
\( Tr(\rho_2(\sigma_v)) /Nv^{k_2/2}\) at a set of primes \( v\) of $K$
with positive upper density. Assume further that the algebraic
monodromy group of $\rho_1$ is connected and the repesentation is
absolutely irreducible.  We prove that \( \rho_1 \) and \( \rho_2 \)
are twists of each other by power of a Tate twist times  a character
of finite order.  

We apply this to modular forms and deduce a result proved by Murty and
Pujahari. 

\end{abstract}

\maketitle

\section{Introduction}

Let \( f \) be a Hecke eigenform  of weight \( k \) on \( \Gamma_1 (
N) \).  For a prime \( p \nmid N \),  denote by \( a_p (f ) \) the
eigenvalue of the Hecke operator $T_p$ at \( \ p \). In (\cite{R}),
refinements of the strong multiplicity one theorem for Hecke
eigenforms are given, where one compares the Hecke eigenvalues for two
modular forms $f_1$ and $f_2$. 

In the context of automorphic representations, it is convenient to
work with the normalized Hecke eigenvalue, \( \tilde{a}_p (f) : =
a_p(f) / p^{(k-1)/2} \).  This has the effect that the $L$-function
associated to the `normalized' system of eigenvalues satisfies a
functional equation of the form $s\leftrightarrow 1-s$.   

In a recent article (\cite{MP}), Murty and Pujahari prove a refinement
of the strong multiplicity one theorem for the normalized eigenvalues
associated to Hecke eigenforms:

\begin{theorem}[Murty-Pujahari \cite{MP}] \label{MP}  Suppose \( f_1
\) and \( f_2 \) be two non-CM Hecke eignenforms of weights \( k_1 \)
and \( k_2 \) respectively. Let \( S \) be the set of primes \( p \)
not dividing the levels of $f_1$ and $f_2$,  such that \(\tilde{a}_p
(f_1 ) = \tilde{a}_p (f_2 ) \).  

If the upper density of \( S \) is positive, then \( k_1 = k_2 \), and
\( f_1 \) and \( f_2 \) are twists of each other by a Dirichlet
character. 
\end{theorem}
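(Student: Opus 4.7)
The plan is to deduce Theorem~\ref{MP} from the main result stated in the abstract, applied to the $\ell$-adic Galois representations of Deligne attached to $f_1$ and $f_2$. First I would fix a prime $\ell$ (not dividing the levels) and let $\rho_{f_i} : \Gal(\bQ/\Q) \to \GL_2(\bQ_\ell)$ denote the associated Galois representation. By Deligne's theorem, $\rho_{f_i}$ is pure of weight $k_i - 1$ in the sense of the abstract (its Frobenius eigenvalues at good primes have absolute value $p^{(k_i-1)/2}$), and its Frobenius trace at a prime $p$ outside $\ell$ and the level is $a_p(f_i)$. Hence the normalized trace $\mathrm{Tr}(\rho_{f_i}(\Frob_p))/p^{(k_i - 1)/2}$ coincides with $\tilde a_p(f_i)$, and the positive-upper-density hypothesis on $S$ is precisely the input required by the main theorem.

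Next I would verify the remaining hypotheses on $\rho_{f_1}$. Absolute irreducibility is standard (Ribet). For the connectedness of the algebraic monodromy group, the non-CM assumption is exactly what is needed: by Ribet's theorem on the image of non-CM modular Galois representations, the image of $\rho_{f_1}$ is Zariski dense in $\GL_2$, whose identity component is the whole group. (If a nebentypus introduces a subtlety, one may first pass to a finite extension $K/\Q$ and work with those primes in $S$ that split completely in $K$, still a set of positive upper density.) Applying the main theorem then produces an integer $m$ and a finite-order character $\chi$ of $\Gal(\bQ/\Q)$ such that
\[
\rho_{f_2} \;\cong\; \rho_{f_1} \otimes \chi_{\mathrm{cyc}}^{m} \otimes \chi,
\]
where $\chi_{\mathrm{cyc}}$ denotes the $\ell$-adic cyclotomic character.

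To extract the sharper conclusion of Theorem~\ref{MP}, I would compare Hodge--Tate weights at $\ell$: those of $\rho_{f_i}$ are $\{0,\, k_i - 1\}$, while those of the right-hand side are $\{m,\, k_1 - 1 + m\}$, since finite-order characters are Hodge--Tate of weight $0$. Matching the two multisets and using $k_i \ge 2$ forces $m = 0$ and $k_1 = k_2$. Hence $\rho_{f_2} \cong \rho_{f_1} \otimes \chi$ with $\chi$ a finite-order Galois character of $\Q$, which by class field theory corresponds to a Dirichlet character $\chi_D$; strong multiplicity one for $\GL_2$ then identifies $f_2$ with the twist $f_1 \otimes \chi_D$. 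The step I expect to require the most care is not any single algebraic manipulation but reconciling the weight and normalization conventions (classical modular weight $k$ versus motivic weight $k-1$) and cleanly verifying connectedness of the monodromy group in the presence of nebentypus; granting the main theorem, the remainder of the argument is essentially formal.
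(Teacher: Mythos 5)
Your proposal follows the same overall skeleton as the paper: apply the main theorem to Deligne's representations $\rho_{f_1},\rho_{f_2}$ (noting that the normalized Hecke eigenvalue $\tilde a_p(f_i)$ is the Frobenius trace divided by $p^{(k_i-1)/2}$, that non-CM implies the monodromy group is all of $\GL_2$ and hence connected by Ribet--Serre, and that $\rho_{f_1}$ is absolutely irreducible), obtaining $k_1-1=(k_2-1)+2r$ and $\rho_{f_2}\cong\rho_{f_1}\otimes\omega^{r}\chi$, and then show $r=0$. Where you diverge is in that last step. The paper's official argument (Proposition \ref{mf}) is analytic: it compares the functional equations of $L(f_1,s)$ and $L(f_2,s-r)$, uses the identity $\Gamma(s)/\Gamma(s-r)=\prod_{j=1}^r(s-j)$ to reduce to an identity between a rational function and a quotient of finite Euler products, and derives a contradiction from the distribution of zeros unless $r=0$. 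You instead compare Hodge--Tate weights at $\ell$: the multiset $\{0,k_i-1\}$ versus $\{m,k_1-1+m\}$ forces $m=0$ and $k_1=k_2$. This is essentially the content of the paper's first remark after Proposition \ref{mf} (phrased there via Hodge weights of the underlying motive), so your route is a legitimate and arguably cleaner alternative the authors themselves acknowledge; what the paper's analytic proof buys is independence from $p$-adic Hodge theory and applicability to the Selberg-class setting of Murty--Murty, while your argument is shorter and purely local at $\ell$. Both are correct; just be aware that for $k=1$ forms the Hodge--Tate weights degenerate to $\{0,0\}$, so your "$k_i\ge 2$" caveat is genuinely needed, whereas the paper's zero-counting argument handles the parity and positivity of $k_2=k_1-2r$ directly.
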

The proof by Murty and Pujahari uses the work of Harris,
Taylor and others on Sato-Tate conjecture and is analytic in nature.

In this article,  we prove an analogous theorem in the context of pure
Galois representations. This is a variation of an algebraic
Chebotarev theorem proved in \cite{R}.     

Using Galois representations attached to modular forms we deduce the
above mentioned result of Murty and Pujahari. Our methods also yield a
similar result for Hilber modular forms. 

\section{Theorem}

Let $K$ be a global field,  and $G_K:={\rm Gal}(\bar{K}/K)$ denote the
absolute Galois group over $K$ of a separable closure $\bar{K}$ of
$K$. For a finite place $v$ of $K$, let $Nv$ denote the cardinality of
the residue field at $v$, Let \( \Sigma_K \) denote the set of finite
places of \( K \).

Let $\ell$ be a rational prime coprime to the characteristic of \( K
\) and let $F$ be a $\ell$-adic local field of characteristic zero.
Let $\rho: ~G_K\to GL_n(F)$ be  a continuous  representation of
$G_K$. Since we want to recover the representation from a knowledge of
its traces, we will assume throughout that the representations
considered are semisimple. 

We will assume that $\rho$ is unramified outside a finite set of
places of $K$. At a finite place $v $ of $K$ where $\rho$ is
unramified, let $\sigma_v$ denote the Frobenius conjugacy class in the
quotient group $G_K/{\rm Ker}(\rho)$. If $L$ is a finite extension of
$K$, then we denote by $\rho|_L$ the restriction of the representation
$\rho$ to the subgroup $G_L\subset G_K$. 

Let $E$ be a number field. Define  a $\ell$-adic representation $\rho$
to be $E$-valued, if the characteristic polynomials of the Frobenius
conjugacy classes at unramified primes of $\rho$ has coefficients in
$E$.  

Define the algebraic monodromy group \( G \) attached to \( \rho \) to
be the smallest algebraic subgroup \( G \) of \( GL_n \) defined over
\( F \) such that \( \rho(G_K)\subset G(F) \). This is also the
Zariski closure  of \( \rho(G_K) \) in \( G(F) \).

A Dirichlet character, is a character \( \chi \) of \( G_K\) into
$\bar{F}^*$ of finite order. 

Consider the $\ell$-adic Tate module of ${\mathbb G}_m$, defined as
$\projlim_n \mu_{\ell^n}$, where $\mu_{\ell^n}$ is the group of
${\ell^n}$-roots of unity in $\bar{K}^*$.  This is a $G_K$-module and
denote by $\omega$ its character. For a prime $v$ of $K$ with residue
characteristic not equal to $\ell$, we have $\omega(\sigma_v)=Nv$. The
character $\omega$ is the cyclotomic character acting on the roots of
unity, and is also the inverse of the Tate twist. 

\begin{theorem} \label{maintheorem}
Let \( \rho_1 \), \( \rho_2 \) be  $E$-valued \( \ell \)-adic Galois
representations of $G_K$.  Let $k_1$ and $k_2$ be integers.  
Let $S$  \( v \) of \( K \) unramified for $\rho_1$ and $\rho_2$ such that
$ Tr ( \rho_1 (\sigma_v)/Nv^{k_1/2} ) = Tr ( \rho_2 (\sigma_v)/Nv^{k_2/2} ) $.
Suppose that the Zariski closure \( H_1 \) 
of the image \( \rho_1 (G_K) \) in \( GL_r \) is a connected algebraic
group. 

If the upper density of $S$ is positive, then $k_1=k_2+2r$ 
for some integer $r$ and there is a finite extension
$L$ of $K$, such that 
\[ \rho_2|_L \simeq \rho_1|_L \otimes \omega^r .\]
If in addition, $\rho_1$ is absolutely irreducible, 
then there is a Dirichlet character $\chi$ of $G_K$, such that 
\[ \rho_2\simeq \rho_1\otimes \omega^r \chi.\]
\end{theorem}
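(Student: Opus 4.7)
The plan is to reduce the normalized trace equality to an ordinary trace equality between two $E$-valued semisimple $\ell$-adic representations and then invoke the algebraic Chebotarev theorem of \cite{R}. The trace condition rearranges as $Tr(\rho_1(\sigma_v)) = Nv^{(k_1-k_2)/2}\,Tr(\rho_2(\sigma_v))$ for $v\in S$. If we can establish that $(k_1-k_2)/2$ is an integer $r$, then $Nv^r=\omega^r(\sigma_v)$ and the right-hand side equals $Tr((\rho_2\otimes\omega^r)(\sigma_v))$; the traces of the $E$-valued semisimple representations $\rho_1$ and $\rho_2\otimes\omega^r$ then agree on $S$, and since the algebraic monodromy of $\rho_1$ is connected by hypothesis, the algebraic Chebotarev theorem of \cite{R} yields a finite Galois extension $L/K$ with $\rho_1|_L\simeq(\rho_2\otimes\omega^r)|_L$, establishing the first assertion.

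Showing that $k_1-k_2$ is even is the core technical step. I would first square the trace identity to obtain $Tr((\rho_1^{\otimes 2}\otimes\omega^{-k_1})(\sigma_v))=Tr((\rho_2^{\otimes 2}\otimes\omega^{-k_2})(\sigma_v))$ on $S$, which now involves only integer powers of $\omega$, and apply algebraic Chebotarev to obtain an isomorphism of these $E$-valued semisimple representations over some finite Galois extension $L$; matching dimensions forces $n_1=n_2=n$. Taking determinants yields $\eta^{2n}|_L=\omega^{n^2(k_1-k_2)}|_L$ for $\eta:=\det\rho_1\cdot(\det\rho_2)^{-1}$, so $\eta^{2n}=\omega^{n^2(k_1-k_2)}\cdot\psi$ for some finite-order character $\psi$ of $G_K$. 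Restricting $\eta$ to a decomposition group above $\ell$ and comparing Hodge-Tate weights (which are integers) forces $n(k_1-k_2)$ to be even; for $n$ odd this gives parity directly, and for $n$ even I would use the $E$-valuedness applied to the original identity: if $k_1-k_2$ were odd, then for $v\in S$ with $Tr(\rho_2(\sigma_v))\ne 0$ one would need $Nv^{(k_1-k_2)/2}\in E$, forcing $\sqrt{p}\in E$ for a positive density of rational primes $p$, contradicting that only finitely many such $\sqrt{p}$ lie in a fixed number field.

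For the sharper conclusion under absolute irreducibility of $\rho_1$, one enlarges $L$ if needed (using Clifford theory) so that $\rho_1|_L$ remains absolutely irreducible, then considers the intertwining space $V:=\mathrm{Hom}_{\bar{F}[G_L]}(\rho_1|_L,\,(\rho_2\otimes\omega^r)|_L)$. By Schur's lemma, $\dim_{\bar{F}}V=1$, and the conjugation action of $G_K$ on $V$ factors through $G_K/G_L$ to give a finite-order character $\chi:G_K\to\bar{F}^{*}$. A nonzero element of $V$ is then $G_K$-equivariant as a morphism $\rho_1\otimes\chi\to\rho_2\otimes\omega^r$, providing the desired isomorphism $\rho_2\simeq\rho_1\otimes\omega^{-r}\otimes\chi$. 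The principal obstacle is the parity step: the half-integer exponent $(k_1-k_2)/2$ is at the heart of the difficulty, and bridging it to an integer requires combining $p$-adic Hodge theory with a density or $E$-rationality argument, neither of which is entirely formal.
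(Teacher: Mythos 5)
Your overall architecture matches the paper's: show that $k_1-k_2=2r$ is even, replace $\rho_2$ by $\rho_2\otimes\omega^r$ so that the unnormalized traces agree on $S$, and invoke the refined strong multiplicity one theorem of \cite{R} (Theorem 2, part (iii) there), which already contains both the ``isomorphic over a finite extension $L$'' conclusion and, under absolute irreducibility, the twist-by-a-Dirichlet-character conclusion; your Schur-lemma argument at the end is essentially a re-proof of that cited result. The problem is the parity step, where your primary argument does not work. The theorem places no $p$-adic Hodge theory hypothesis on $\rho_1,\rho_2$: they are arbitrary continuous semisimple $E$-valued $\ell$-adic representations, not assumed Hodge--Tate or de Rham at places above $\ell$. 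The character $\eta=\det\rho_1\cdot(\det\rho_2)^{-1}$ therefore need not be Hodge--Tate, and knowing $\eta^{2n}=\omega^{n^2(k_1-k_2)}\psi$ only pins down its Sen weight as $n(k_1-k_2)/2$; for a general $\ell$-adic character the Sen weight is an arbitrary $\ell$-adic number, so ``Hodge--Tate weights are integers'' is not available and nothing forces $n(k_1-k_2)$ to be even. (You would also need to justify that the algebraic monodromy of $\rho_1^{\otimes 2}\otimes\omega^{-k_1}$ is connected before applying \cite{R} to the squared identity.)

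The argument you relegate to the case ``$n$ even'' --- $E$-rationality of $Nv^{(k_1-k_2)/2}$ --- is in fact the paper's argument, and it works uniformly in $n$, so the tensor-square/Hodge--Tate detour should be discarded entirely. But as you state it, it too has a gap: you need the set of $v\in S$ with $Tr(\rho_2(\sigma_v))\neq 0$ to have positive upper density (after further restricting to degree-one places so that $Nv$ is a rational prime $p$), since a priori every $v\in S$ could satisfy $Tr(\rho_1(\sigma_v))=Tr(\rho_2(\sigma_v))=0$, in which case the identity yields no rationality constraint on $p^{(k_1-k_2)/2}$ whatsoever. This is precisely where the connectedness of $H_1$ is used a first time: the paper proves, by applying the algebraic Chebotarev theorem of \cite{R} to the conjugation-invariant closed subvariety $\{g\in GL_n\mid Tr(g)=0\}$, that the trace-zero locus of a representation with connected monodromy has density zero, so discarding those places does not change the upper density of $S$. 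With that proposition inserted and your fallback promoted to the main argument, the proof closes and coincides with the paper's.
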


\section{Proof of Theorem \ref{maintheorem}}

\begin{proposition} Let \( \rho\) be a \( \ell \)-adic Galois
representations of $G_K$. Assume that the algebraic monodromy group
$G$ of $\rho$ is connected. Then the set $V$ of places $v$ of
$\Sigma_K$ which is unramified for $\rho$ and  where \( Tr(
\rho(\sigma_v ) ) = 0 \) is of density zero. 
\end{proposition}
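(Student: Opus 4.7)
The plan is to combine the Chebotarev density theorem with the Zariski density of $\Gamma := \rho(G_K)$ in its algebraic monodromy group $G$. Set $Z := \{g \in G : Tr(g) = 0\}$. Since $Tr$ is a nonzero regular function on $G$—observe $Tr(e) = n$, the dimension of $\rho$, which is nonzero as $F$ has characteristic zero—and $G$ is connected hence irreducible, $Z$ is a proper Zariski-closed subvariety of $G$.

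Next, I would show that $\Gamma \cap Z(F)$ has Haar measure zero in the compact $\ell$-adic Lie group $\Gamma$. The set is closed in $\Gamma$ since $Z$ is Zariski, hence $F$-analytically, closed in $G$. I claim it has empty interior: otherwise $g_0 U \subset Z(F)$ for some $g_0 \in \Gamma$ and some open neighborhood $U$ of the identity in $\Gamma$. Since $\Gamma$ is profinite (being a closed subgroup of the totally disconnected group $GL_n(F)$), we may shrink $U$ to an open subgroup. Such a subgroup has finite index in $\Gamma$, and its Zariski closure is a union of finitely many cosets of an algebraic subgroup $G'$ of $G$. Because $\Gamma$ is Zariski dense in the connected group $G$, a finite union of translates of a proper $G'$ cannot cover $G$, forcing $G' = G$. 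Thus the regular function $h \mapsto Tr(g_0 h)$ vanishes on a Zariski-dense subset of $G$ and hence identically on $G$, contradicting $Tr(g_0 g_0^{-1}) = n \neq 0$. Since $\Gamma \cap Z(F)$ is a closed, nowhere-dense analytic subset of the $\ell$-adic Lie group $\Gamma$, standard $\ell$-adic analysis (the vanishing locus of a nonzero analytic function on a $p$-adic manifold has measure zero) gives $\mu(\Gamma \cap Z(F)) = 0$.

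Finally, Chebotarev density applied to finite Galois quotients of $K$ yields that for any clopen conjugation-invariant subset $C \subset \Gamma$,
\[
d(\{v \in \Sigma_K : \sigma_v \in C\}) = \mu(C)/\mu(\Gamma).
\]
By outer regularity of Haar measure on the profinite group $\Gamma$, one approximates $\Gamma \cap Z(F)$ from above by clopen conjugation-invariant sets $C_i$ with $\mu(C_i) \to 0$. The set $V$ of the proposition lies inside $\{v : \sigma_v \in C_i\}$ for every $i$, so its upper density is bounded by $\mu(C_i)/\mu(\Gamma) \to 0$, giving density zero.

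The main obstacle is the step showing $\mu(\Gamma \cap Z(F)) = 0$: one must convert the algebraic fact that $Z$ is a proper Zariski-closed subvariety of the connected group $G$ into the analytic fact that $\Gamma \cap Z(F)$ is negligible in $\Gamma$. The crucial bridge is that $\Gamma$ is profinite, every open subgroup has finite index, and every such finite-index open subgroup is itself Zariski dense in $G$, allowing the algebraic and $\ell$-adic analytic arguments to be interwoven.
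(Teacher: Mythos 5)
Your proof is correct, but it takes a different route from the paper. The paper disposes of the proposition in two lines by invoking the algebraic Chebotarev theorem of \cite{R} as a black box: for the conjugation-invariant closed subvariety $X=\{g\in GL_n : Tr(g)=0\}$, that theorem says the density of $\{v : \rho(\sigma_v)\in X\}$ equals the proportion of connected components of $G$ contained in $X$, and since $G$ is connected and $Tr(e)=n\neq 0$ the identity component is not contained in $X$, so the density is zero. What you have done instead is reprove from first principles exactly the special case of that theorem which is needed here: you pass from ``$\{Tr=0\}$ is a proper Zariski-closed subset of the irreducible group $G$'' to ``$\rho(G_K)\cap\{Tr=0\}$ is Haar-negligible in the compact group $\rho(G_K)$'' via the key observation that every open subgroup of the profinite image has finite index and is therefore still Zariski dense in the connected $G$, and you then descend to finite quotients using the classical Chebotarev theorem together with outer regularity of Haar measure. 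This is in substance the mechanism underlying the proof of the cited theorem, so your argument gains no generality, but it buys self-containedness: the reader need not unwind \cite{R}. The price is that you must invoke the $\ell$-adic analytic fact that the zero locus of an analytic function on a compact $\ell$-adic Lie group has Haar measure zero provided the function does not vanish identically on any chart; note that ``closed and nowhere dense'' alone would not suffice for this conclusion, so the real content is carried by your parenthetical appeal to the analytic-function formulation, which your empty-interior step correctly feeds into.
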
 

\begin{proof} Consider the variety 
\[ 
	X=\{g\in GL_n\mid Tr(g)=0.	\}
\]
This is a closed subvariety in $GL_n$ and closed under conjugation. 
By the algebraic Chebotarev theorem \cite[Theorem 3]{R}, the set $V$
has density equal to the ratio of the number of connected components
of $G$ contained inside $X$ to the total number of connected
components of $G$. The connected component of identity of  $G$ cannot
be contained in $X$, since the trace of the identity element is
$n$. Since $G$ is connected, this proves the proposition. 
\end{proof}

We now observe that \( k_1 \) and \( k_2 \) have the same parity.  Let
\( S^\prime \) be the set of places \( v \in S \) such that \( Tr (
\rho_1 ( Frob_v ) ) \neq 0 \) and \( Tr ( \rho_2 ( Frob_v ) ) \neq 0
\). By the above proposition the upper density of \( S^\prime \)
equals the upper density of \( S \) and is positive. 

Let $S'_1$ denote the subset of places of $S'$ which is of degree one
over $\Q$. Since the set of places of a number field $K$ of degree one
over the rationals is of density one in $K$, it follows that the upper
density of  $S'_1$ is positive.  For \( v  \in S^\prime_1 \), 
\[ \frac{Tr ( \rho_1 ( \sigma_v ) )}{ Tr ( \rho_2 (\sigma_v ) )} 
= Nv^{(k_1 - k_2)/2} .\]  
Furthermore, by our assumtion that the representations 
are \( E \)-valued, it follows that 
\( Nv^{(k_1 - k_2)/2} = p^{(k_1 - k_2)/2} \in E \) for a set of primes of
positive density. This forces that \( k_1 \) and \( k_2 \) to have the
same parity. Let \( 2r = k_1 - k_2 \).  

Consider now the two representations, \( \rho_1 \) and 
\( \rho_2 \otimes \omega^r \). By our assumption, for \( v \in S \), 
\begin{equation}\label{eqtraces}
	Tr ( \rho_1 (\sigma_v ) ) = Tr ( \rho_2 \otimes \omega^r (\sigma_v) ).
\end{equation}

We recall the following theorem proved in \cite[Theorem 2, Part (iii)]{R}:

\begin{theorem}  Let \( \rho_1 \), \( \rho_2 \) be \( \ell \)-adic
Galois representations. Let \( SM( \rho_1 , \rho_2 ) \) be the set of
places \( v \) of \( K \) such that \( Tr ( \rho_1 ( \sigma_v ) ) = Tr
( \rho_2 (\sigma_v )  ) \). 

Assume that the Zariski closure \( H_1 \) of the image \( \rho_1 (G_K)
\) in \( GL_r \) is a connected algebraic group, and  the upper
density of \( SM (\rho_1, \rho_2) \) is positive. Then there exists a
finite extension $L$ of $K$ such that \( \rho_2|_L \simeq \rho_1|_L
\). 

If in addition \( \rho_1 \) is absolutely irreducible, then there
exists  a Dirichlet character $\chi$ of $G_K$, such that \( \rho_2
\simeq \rho_1 \otimes \chi \).
\end{theorem}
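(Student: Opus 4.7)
The plan is to apply the algebraic Chebotarev theorem \cite[Theorem 3]{R} --- the same tool used in the preceding proposition --- to the direct sum representation $\rho := \rho_1 \oplus \rho_2 : G_K \to GL_n(F) \times GL_n(F)$, paired with the closed, conjugation-invariant subvariety
\[
X := \{(g_1, g_2) \in GL_n \times GL_n : Tr(g_1) = Tr(g_2)\}.
\]
Let $G$ be the Zariski closure of the image of $\rho$ in $GL_n \times GL_n$, and let $\pi_1, \pi_2 : G \to GL_n$ denote the two projections, so that $H_1 = \pi_1(G)$ is the given connected algebraic monodromy group of $\rho_1$. The Chebotarev theorem then converts the positive upper density of $SM(\rho_1,\rho_2)$ into the statement that at least one connected component of $G$ lies entirely inside $X$.

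The crux is to improve this to $G^0 \subset X$, exploiting the connectedness of $H_1$. Since $\pi_1(G^0)$ is a finite-index subgroup of the connected group $H_1$, we have $\pi_1(G^0) = H_1 = \pi_1(G)$, and consequently every connected component of $G$ contains an element of the form $(1, k)$. Suppose a component $(1,k)G^0$ lies in $X$, so
\[
Tr(\pi_1(h)) = Tr(k \, \pi_2(h)) \qquad \text{for all } h \in G^0.
\]
The left-hand side is the character $\chi_{V_1}$ of the $G^0$-representation $V_1$, hence a class function; imposing invariance of the right-hand side under $G^0$-conjugation forces $k$ to centralize $\pi_2(G^0)$. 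Decomposing $V_1$ and $V_2$ into irreducible $G^0$-summands and matching coefficients via a Peter--Weyl / Schur orthogonality analysis then yields $\chi_{V_1} = \chi_{V_2}$ on $G^0$; equivalently, $G^0 \subset X$.

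Let $L$ be the finite extension of $K$ such that $\rho(G_L) \subset G^0(F)$. The previous step gives $Tr(\rho_1(\sigma_w)) = Tr(\rho_2(\sigma_w))$ at every unramified place $w$ of $L$; classical Chebotarev combined with Brauer--Nesbitt (using semisimplicity of the restrictions) yields $\rho_1|_L \simeq \rho_2|_L$, proving the first assertion. For the refined statement, assume $\rho_1$ is absolutely irreducible and, enlarging $L$ if necessary, assume $L/K$ is Galois. Fix any $G_L$-equivariant isomorphism $A : V_1 \xrightarrow{\sim} V_2$. For each $\sigma \in G_K$, the operator $\rho_2(\sigma) \, A \, \rho_1(\sigma)^{-1}$ again intertwines $\rho_1|_L$ and $\rho_2|_L$, by normality of $G_L$ in $G_K$; Schur's lemma --- applicable because $\rho_1$ is absolutely irreducible --- forces it to equal $\chi(\sigma) A$ for a scalar $\chi(\sigma) \in \bar F^*$. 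A direct computation shows $\chi$ is a continuous character of $G_K$ trivial on $G_L$, hence of finite order, and satisfies $\rho_2 \simeq \rho_1 \otimes \chi$.

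The principal obstacle is the upgrading step in the second paragraph: converting ``some component of $G$ lies in $X$'' into ``$G^0$ lies in $X$''. The connectedness of $H_1$ is essential --- without it, simple examples with cyclic component groups produce situations in which a non-identity component of $G$ lies in the trace-equality locus while $G^0$ does not --- and the rigorous Peter--Weyl matching requires carefully tracking where the twisted matrix coefficient $Tr(k \, \pi_2(\cdot))$ sits inside $F[G^0]$ relative to the character $\chi_{V_1}$.
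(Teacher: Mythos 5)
First, note that the paper does not prove this statement at all: it is quoted verbatim as \cite[Theorem 2, Part (iii)]{R} and used as a black box, so your attempt is being measured against Rajan's original argument rather than anything in this article. Your overall architecture --- algebraic Chebotarev applied to $\rho_1\oplus\rho_2$ with the conjugation-invariant variety $X=\{Tr(g_1)=Tr(g_2)\}$, the observation that connectedness of $H_1$ lets you put a representative of the form $(1,k)$ in the distinguished component, trace equality on $G^0$ followed by Chebotarev and Brauer--Nesbitt over $L$, and the Schur-lemma/Clifford argument producing the finite-order character $\chi$ --- is the right framework, and the first, fourth and fifth paragraphs are essentially sound (for the last one you should note explicitly that $\rho_1|_L$ remains absolutely irreducible because $H_1$ is connected, so Schur's lemma applies).

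The genuine gap is exactly at the step you flag as the crux, and your two claims there do not hold as stated. Write $V_2|_{G^0}=\bigoplus_i M_i\otimes W_i$ with multiplicity spaces $M_i$ of dimension $n_i$, and $V_1|_{G^0}=\bigoplus_i W_i^{m_i}$. The hypothesis $Tr(\pi_1(h))=Tr(k\,\pi_2(h))$ on $G^0$, combined with linear independence of matrix coefficients, pins down only the partial traces: $(Tr_{M_i}\otimes 1)\bigl(P_i k P_i\bigr)=m_i\cdot 1_{W_i}$ for each $i$. In particular the class-function property does \emph{not} force $k$ to centralize $\pi_2(G^0)$ (take $\pi_2(G^0)=SL(W)\otimes 1$ acting on $W\otimes \C^2$ and $k=c\otimes A$ with $c$ non-central and $Tr(A)=0$: the twisted trace is identically zero, hence a class function, yet $k$ is not central). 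More seriously, the condition ``partial trace of $k$ on the $W_i$-block equals $m_i\cdot 1_{W_i}$'' is perfectly compatible with $n_i\neq m_i$ (e.g.\ $n_i=3$, $k$ acting on $M_i$ with eigenvalues $m_i,1,-1$), so the Peter--Weyl matching yields only that every irreducible constituent of $V_1|_{G^0}$ occurs in $V_2|_{G^0}$, not the asserted equality $\chi_{V_1}=\chi_{V_2}$ on $G^0$. Hence the first conclusion, $\rho_1|_L\simeq\rho_2|_L$ for merely semisimple $\rho_1$, is not established by your argument; closing it requires additional input beyond the single relation on one coset. When $\rho_1$ is absolutely irreducible the gap is easily repaired --- $V_1|_{G^0}$ is then a single irreducible $W$ of dimension $n$, the above gives $W\hookrightarrow V_2|_{G^0}$, and $\dim V_2=n$ forces $V_2|_{G^0}\cong W$ --- but you never invoke this dimension count, and it does not rescue the general case.
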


Since Equation \ref{eqtraces} holds at the set of places $v\in S$
which has positive density, and by our hypothesis,  
Theorem \ref{maintheorem} follows 
from the foregoing theorem. 

\section{Application to modular forms}

We first recall some well known facts about modular forms. 
Let $N, ~k$ be natural numbers, and $f$ a newform of level $N$, weight
$k$ and Nebentypus character $\epsilon$. 
By a theorem of Shimura, it is known that the field $E$ generated by
the Hecke eigenvalues $a_p(f), ~p\nmid N$ is a number field.
By theorems of Eichler, Shimura, Ihara, Deligne and Serre, 
given any rational prime $\ell$, there is a $\ell$-adic local field
$F\supset E$, and  a two dimensional Galois
representation $\rho_f :G_{\Q}\to GL_2(F)$ unramified outside $N$, 
such that for primes
$p\nmid N$, 
\[ Tr(\rho_f(\sigma_p))=a_p(f)\quad \mbox{and} 
\quad det(\rho_f(\sigma_p)) =\epsilon(p)p^{k-1}.
\]
By theorems of Serre and Ribet, it is known that if the form $f$ is
non-CM, then the algebraic monodromy group is isomorphic to $GL_2$. 

\begin{proof}[Proof of Theorem \ref{MP}]
We deduce now Theorem \ref{MP} from Theorem \ref{maintheorem}. 
Under the assumptions of Theorem \ref{MP}, by Theorem
\ref{maintheorem}, we conclude that $k_1=k_2+2r$ for some integer $r$,
and there exists a Dirichlet character $\chi$ of $G_{\Q}$ such that 
\[ 
	\rho_{f_1}=(\rho_{f_2}\otimes \chi) \otimes \omega^r.
\]
By Proposition \ref{mf} proved below, 
 $r=0$, and hence Theorem \ref{MP} follows. 
\end{proof}

Given a Hecke eigenform \( g \in S_k (N) \), 
\( L(g, s) \), the \( L \)-function associated to \( g \) is given by
\[
	L(g, s ) = \sum_{n \geq 1 } \frac{ a_n (g) }{ n^s } .
\]
It has an Euler product expansion given by
\[
	L(g, s ) := \prod_{p \mid N} \left( 1 - \frac{a_p (g)}{ p^s }  
	\right)^{-1}  \prod_{p \nmid N} \left( 1 - \frac{a_p (g)}{ p^s } +  
	\frac{p^{k-1}}{p^{2s}} \right)^{-1}  = \prod_p  E_p ( g, s ) ,
\]
where \( E_p ( g, s) \) is the Euler factor at \( p \) 
depending on \( p \) dividing \( N\) or not. For a natural number $M$ divisible 
by \( N \), define the partial $L$-function, 
\[
	L_M(g,s)=\prod_{p \nmid M }
\left(1 - \frac{a_p (g)}{ p^s }+\frac{p^{k-1}}{p^{2s}} \right)^{-1} . 
\]
The completed \( \Lambda \) function defined by 
\(  \Lambda ( g , s) = N^{s/2} ( 2 \pi )^{-s} \Gamma (s) L ( g, s ) \) 
satisfies the functional equation of the form:
\[
	\Lambda (g, s) = \lambda(g)  \Lambda ( g , k -s )  ,
\]
where \( \lambda(g) \) is a non-zero complex number and \( \Gamma (s ) \) 
is the \( \Gamma \)-function. Thus \( L (g, s ) \) satisfies a
functional equation of the form
\[
	A_g (s ) \Gamma (s ) L (g, s) 
		= \lambda (g) A_g ( k -s ) \Gamma ( k -s ) L (g, k -s) ,
\] 
where 
\( A_g (s ) \) is a very specific entire function of \( s \) 
that is nowhere zero. 

We give a proof of the  following folklore fact (see also (\cite{MM}):  

\begin{proposition}\label{mf}  
Suppose \( f_1 \) and \( f_2 \) be two Hecke
eignenforms of weights \( k_1 \) and \( k_2 \) and levels  \( N_1 \)
and \( N_2 \), respectively. Suppose there exists a non-negative
integer $r$, such that  $k_1=k_2+2r$ and
$\rho_{f_1}=\rho_{f_2}\otimes \omega^r$, where \( \omega \) is a \(
\ell \)-adic Tate character.  Then $r=0$. 
\end{proposition}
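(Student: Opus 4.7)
The plan is to convert the Galois identity into an identity of partial $L$-functions, then play the two functional equations against each other so that the difference between the Gamma factors $\Gamma(s)$ and $\Gamma(s-r)$ shows up as a genuine polynomial-in-$s$ obstruction; a simple ``rational in $s$ vs.\ Laurent in the $p^{-s}$'s'' counting argument then forces the polynomial to be trivial.

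\emph{Step 1: Reduce to an identity of partial $L$-functions.} I would first translate $\rho_{f_1}\simeq \rho_{f_2}\otimes\omega^r$ into a Hecke eigenvalue identity. Since both representations are two-dimensional and semisimple, matching traces and determinants at each unramified prime $p$ forces $a_p(f_1)=p^r a_p(f_2)$ and $\epsilon_1(p)=\epsilon_2(p)$. The standard two-term recursion at good primes then extends this to $a_n(f_1)=n^r a_n(f_2)$ for every $n$ coprime to $M:=N_1 N_2$, so that
\[ L_M(f_1,s)=L_M(f_2,s-r). \]

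\emph{Step 2: Combine the two functional equations.} Put $R(s):=L(f_1,s)/L(f_2,s-r)$; by Step 1 this ratio equals the finite product $\prod_{p\mid M} E_p(f_1,s)/E_p(f_2,s-r)$, which is a Laurent rational function in the variables $\{p^{-s}:p\mid M\}$. Crucially, the two completed $L$-functions $\Lambda(f_i,s)$ share the same symmetry line, since $k_2/2+r=k_1/2$. Dividing the functional equations $\Lambda(f_i,s)=\lambda_i \Lambda(f_i,k_i-s)$ and using the identity $\Gamma(s)/\Gamma(s-r)=P(s)$ with $P(s):=(s-r)(s-r+1)\cdots(s-1)$, a direct computation yields
\[ \frac{R(s)}{R(k_1-s)} \;=\; C\cdot\Bigl(\frac{N_1}{N_2}\Bigr)^{(k_1-2s)/2}\cdot \frac{P(k_1-s)}{P(s)}, \]
valid as meromorphic functions of $s$, with $C=\lambda_1/\lambda_2$.

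\emph{Step 3: Pole-counting to conclude $r=0$.} The factor $(N_1/N_2)^{-s}$ is itself a Laurent monomial in the $p^{-s}$ for $p\mid M$, so absorbing it on the left makes the left-hand side a Laurent rational function in finitely many variables $p^{-s}$. A nonconstant such function has infinitely many poles: its denominator, viewed as a function of $s$, is a nontrivial exponential polynomial $\sum a_j e^{-s\log n_j}$ whose zero set is infinite (by Bohr's theory of almost-periodic functions, or by an elementary growth argument). The right-hand side $P(k_1-s)/P(s)$ is a rational function of $s$ with only finitely many poles. Hence both sides must be constant, forcing $P(k_1-s)=c\,P(s)$. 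The zero set $\{1,\ldots,r\}$ of $P(s)$ and the zero set $\{k_1-r,\ldots,k_1-1\}$ of $P(k_1-s)$ must therefore coincide as sets of $r$ consecutive integers, which requires $k_1=r+1$ and so $k_2=k_1-2r=1-r$. Since $k_2\geq 1$ and $r\geq 0$, this forces $r=0$ (and, as a byproduct, $N_1=N_2$).

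The main obstacle is the rigorous separation of ``rational in $s$'' from ``Laurent rational in the variables $p^{-s}$'' in Step 3, which hinges on the fact that a nontrivial exponential polynomial has infinitely many zeros; once that lemma is in hand the rest is careful bookkeeping of the two functional equations and the bad Euler factors.
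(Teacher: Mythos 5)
Your proposal is correct and follows essentially the same route as the paper: reduce the Galois identity to $L_M(f_1,s)=L_M(f_2,s-r)$, divide the two functional equations, use $\Gamma(s)/\Gamma(s-r)=(s-1)\cdots(s-r)$, and then exploit the dichotomy between a rational function of $s$ (finitely many zeros and poles) and a quotient of Dirichlet polynomials in the $p^{-s}$ (infinitely many, unless trivial), with the positivity of $k_2$ ruling out the degenerate cancellation. The only caveat is that your assertion that a nonconstant Laurent rational function in the $p^{-s}$ has infinitely many poles needs the small supplement excluding pure monomials $n^{-s}$ (which are nonconstant but zero- and pole-free) by a growth argument, as you yourself hint.
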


\begin{proof} We rephrase our assumptions:  We are given two cusps
forms  \( f_1 \) and \( f_2 \) belonging to \( S_{k_1} ( N_1) \) and
\( S_{k_2} (N_2) \) respectively such that their \( p \)-th  Fourier
coefficients for primes \( p \nmid \ell N_1 N_2 \) differ by the \(
p^r \). Let $N = \ell N_1 N_2 $. For $p$ co-prime to $N$, we have $a_p
(f_1 ) =  a_p (f_2 ) p^r $. Since $k_1 = k_2+2r$, 
\begin{equation*}
L_N (f_1, s) =  \prod_{p \nmid N } \left( 1 - \frac{a_p (f_2)}{
    p^{(s-r)} } 
+  \frac{p^{k_2-1}}{p^{2(s-r)}} \right)^{-1}  
            =  L_N(f_2, s - r ) =  
\frac{ L ( f_2 , s - r )}{ \prod_{ p \mid N }  E_p ( f_2, s - r)  }
\end{equation*} 
Thus, we get:
\begin{equation} \label{eq:Lf1-Lf2}
L (f_1 , s ) = \left(   \prod_{ p \mid N }  
\frac{ E_p ( f_1, s)}{ E_p ( f_2, s-r)} \right) \cdot L (f_2 , s-r ) .
\end{equation}
Let us recall the functional equations satisfied by 
\( L (f_1, s) \) and \( L (f_2, s-r ) \):
\begin{equation} \label{eq:FEf1}
A_{f_1} (s ) \Gamma (s ) L (f_1, s) 
= \lambda (f_1) A_{f_1} ( k_1 -s ) \Gamma ( k_1 -s ) L (f_1, k_1 -s) .
\end{equation}
\begin{equation} \label{eq:FEf2}
A_{f_2} (s-r ) \Gamma (s -r) L (f_2, s-r) 
= \lambda (f_2) A_{f_2} ( k_2 +r -s ) \Gamma ( k_2 +r -s ) L (f_2, k_2 +r -s) .
\end{equation}

Dividing (\ref{eq:FEf1}) by (\ref{eq:FEf2}) we get:

\begin{equation} \label{eq:5}
\begin{split}
\frac{A_{f_1} (s )}{A_{f_2} (s-r )} &\cdot 
\frac{ \Gamma (s)}{\Gamma (s -r)}  \cdot 
\prod_{p \mid N} \frac{ E_p ( f_1, s)}{ E_p ( f_2, s-r)}\\
&= 
\frac{\lambda (f_1) }{\lambda (f_2) } \cdot 
\frac{A_{f_1} ( k_1 - s )}{A_{f_2} ( k_2 + r - s )} \cdot
\frac{\Gamma (k_1 -s )}{\Gamma ( k_2 + r -s )} \cdot 
\frac{ L (f_1 , k_1 - s) }{L(f_2, k_2 +r - s)} 
\end{split}
\end{equation}
Using (\ref{eq:Lf1-Lf2}) with \( s \) replaced by \( k_1 - s \) 
and the \( \Gamma \)-function identity, 
\( \frac{ \Gamma (s)}{  \Gamma ( s - m)} = 
 \prod_{j=1}^m ( s - j) \)  for \( m = r \), 
in (\ref{eq:5}), we get:
\begin{equation} \label{eq:6}
\begin{split}
&\frac{A_{f_1} (s )}{A_{f_2} (s-r )} \cdot 
\prod_{j=1}^r ( s - j) \cdot 
\prod_{p \mid N} \frac{ E_p ( f_1, s)}{ E_p ( f_2, s-r)}\\
&=
\frac{\lambda (f_1) }{\lambda (f_2) } \cdot 
\frac{A_{f_1} ( k_1 - s )}{A_{f_2} ( k_2 + r - s )} \cdot
( - 1 )^r \prod_{j=1}^r ( s - (k_1 - j) ) \cdot 
\frac{ E_p ( f_1, k_1 - s)}{ E_p ( f_2, k_2 + r - s )}  
\end{split}
\end{equation}
We re-write the above equation as
\begin{equation}\label{eq:7}
\frac{ \prod_{j=1}^r ( s - j) }{\prod_{j=1}^r ( s - (k_1 - j) )} 
= \lambda \cdot 
A(s)  \cdot 
\prod_{p \mid N} \frac{ E_p(f_1, k_1 - s) E_p(f_2, s-r)}
{ E_p ( f_2, k_2 + r - s ) E_p ( f_1, s) }  
\end{equation}
The right hand side of (\ref{eq:7}) can be written as a product 
of a nowhere zero entire function and a quotient of finite Euler products: 
\[
\lambda \cdot A(s) \cdot \frac{\prod_{i=1}^a(1-\alpha_i {p_i}^{-s})}
{\prod_{j=1}^b(1-\beta_j {p_i}^{-s})}
\]
where $a, ~b$ are natural numbers, and $\alpha_i, ~1\leq i\leq  a$ and 
$\beta_j, ~1\leq j\leq b$ are some
complex numbers and \( p_i \) are prime numbers dividing \( N \). Here
$A(s)$ is a nowhere vanishing entire function, and $\lambda$ is a
non-zero constant. 

We now claim that \( r = 0 \). Let us assume the contrary. 

Note that there is no cancellation of terms between the numerator 
and the denominator on the left hand side. For otherwise, for some 
\( 1 \leq j_1 , j_2 \leq r \), we have \( k_1 = j_1 + j_2 \leq 2r \), 
which is a contradiction as \( k_1 - 2r = k_2 > 0 \). 
 
This implies that the right hand side has to have a zero, in turn
forcing the existance of a term of the form \( (1 - \alpha_i
{p_i}^{-s} )\)  for  some \( i \) and some non-zero \( \alpha_i
\). Note that any such term has an infinitely many zeros 
\( s = \frac{\ln \alpha_i +  2 \pi  i n}{\ln p_i } \) 
for \( n \in \Z \). This is a
contradiction as there are only a finitely many zeroes to the left
hand side. 

Thus, to sum up, examining the zeros of the two sides of 
Equation (\ref{eq:7}), we conclude that 
$r=0$ and \( k_1 = k_2 \), and this proves the proposition.
\end{proof}

\begin{remark} A different way of looking at this result is to use the
  fact that the Galois representation attached to a modular form of
  weight $k$, arises from a motive with Hodge weights  $(0,k-1),
  (k-1,0)$. Twisting by the cyclotomic character $\omega^r$
  (equivalently by a Tate twist $-r$-times), the Hodge weights are of
  the form $(r, k-1+r), (k-1+r, r)$. Thus, if the twisted representation has
  to arise from a newform, then $r=0$.   
\end{remark}

\begin{remark} Another way of seeing the validity of Proposition
  \ref{mf} is to consider the automorphic
  representations attached to the cusp forms $f_1$ and $f_2\otimes \chi$. By the
  strong multiplicity one theorem of Piatetskii-Shapiro and
  Jacquet-Shalika, 
the corresponding automorphic representations are isomorphic. The
weight of the modular form is given by the discrete series, the
component at infinity of the corresponding automorphic
representation. Hence it follows that $r=0$. 
\end{remark}

\begin{remark} The above proof allows one to prove
analogues of Theorem \ref{MP}, in the context of
  Hilbert modular forms, or even more generally whenever one can
  attach Galois representations to automorphic forms or even in a
  geometric context for the $l$-adic representations arising from
  cohomology of varieties defined over global fields with fixed Hodge
  structures. 
\end{remark}

\section*{Acknowledgements}
The first author thanks the School of Mathematics, Tata Institute of
Fundamental Research, Mumbai for its excellent hospitality and work
environment. We thank Dipendra Prasad for some interesting
discussions.

\end{document}